\documentclass[12pt, letterpaper]{article}

\usepackage{fullpage}
\usepackage{amsfonts}
\usepackage{amsmath, amsthm}
\usepackage{amsthm}
\usepackage{amssymb}
\usepackage{cite}
 \usepackage{graphicx}
 \usepackage[all]{xy}

\newcommand{\F}{\mathbb{F}}

\newcommand{\Q}{\mathbb{Q}}

\newcommand{\q}{\mathfrak{q}}

\newtheorem{thm}{Theorem}
\newtheorem{lemma}{Lemma}

\newtheorem{corollary}{Corollary}
\newtheorem{remark}{Remark}

\begin{document}

\title{On $S_3$-extensions with infinite class field tower}
\author{Jonah Leshin}
\maketitle
\begin{abstract} We construct a class of $S_3$-extensions of $\Q$
  with infinite $3$-class field tower in which only three primes
  ramify. As an application, we obtain an $S_3$-extension of $\Q$
  with infinite $3$-class field tower with smallest known (to the author) root discriminant among all fields with infinite $3$-class field tower.

\end{abstract}

\section{Introduction}
Let $K:=K_0$ be a number field, and for $i \geq 1$, let $K_i$
denote the Hilbert class field of $K_{i-1}$-- that is, $K_i$ is the
maximum abelian unramified extension of $K_{i-1}$. The
tower \linebreak$K_0\subseteq K_1 \subseteq K_2\ldots$ is called the Hilbert class
field tower of $K$. If the tower stabilizes, meaning $K^i=K^{i+1}$ for
some $i$, then the class field tower is finite. Otherwise, $\cup_i K^i$ is an
infinite unramified extension of $K$, and $K$ is said to to have infinite class
field tower. For a prime $p$, we define the
$p$-Hilbert class field of $K$ to be the maximal abelian unramified
extension of $K$ of $p$-power degree over $K$. We may then analogously
define the $p$-Hilbert class field tower of $K$. In 1964, Golod and Shafarevich demonstrated the
  existence of a number field with infinite class field tower
  \cite{MR0218331}. This finding has motivated the construction of
  number fields with various properties that have infinite class field
  tower. One of Golod and
Shafarevich's examples of a number field with infinite class field
tower was any quadratic extension of the rationals ramified at
sufficiently many primes,
which was shown to have
infinite $2$-class field tower. An elementary exercise shows that if
$K$ has infinite class field tower, then any finite extension of $K$
does as well. Thus a task of interest becomes
finding number fields of small size with infinite class field towers. The size of a number field $K$ might be measured by the number of rational primes ramifying in $K$,
the size of the rational
primes ramifying in $K$, the root discriminant of $K$, or any combination of these three. 
\par With regard to number of primes ramifying, Schmithals \cite{Schmithals} gave
an example of a quadratic number field with infinite class field tower
in which a single rational prime
ramified. As for small primes ramifying, Hoelscher has given examples of number
fields with infinite class field tower ramified only at $p$ for $p=2,3$, and $5$ \cite{OnePrimeJing}
. Odlyzko's bounds \cite{OdlyzkoSummary} imply that any number field with infinite class field
tower must have root discriminant at least 22.3, (44.6 if
we assume GRH); Martinet showed that the number field $\Q(\zeta_{11}+\zeta_{11}^{-1}, \sqrt{46})$, with root
discriminant $\approx 92.4$, has infinite class field tower  \cite{Martinet}. 

\par In this note, we use a Theorem of Schoof to produce an infinite
class of $S_3$ extensions of $\Q$ with infinite class field tower. Our
fields are ramified at three primes, one of which is the prime 3. Our
main theorem is 

\begin{thm} \label{main}
Let $p\neq 3$ be prime and suppose the class number $h$ of $\Q(\omega,
\sqrt[3]{p})$ is at least 6, where $\omega$ is a primitive third root
of unity. For infinitely many primes $q$, there
exists $\delta \in \{p^aq^b\}_{1\leq a,b \leq 2}$ such that $\Q(\omega, \sqrt[3]{\delta})$ has infinite 3-class field tower. \end{thm}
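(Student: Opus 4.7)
Set $F = \Q(\omega)$, $K_p = F(\sqrt[3]{p})$, and $K_\delta = F(\sqrt[3]{\delta})$ for each $\delta \in \{p^aq^b\}_{1\leq a,b\leq 2}$. Each $K_\delta$ is a cyclic cubic extension of $F$, and because $[K_p:F]=3$ with $h(K_p)\geq 6$ we have $3 \mid h(K_p)$, hence $d_3(\Cl(K_p)) \geq 1$; equivalently, $K_p$ admits an unramified cyclic cubic extension $M/K_p$. The plan is to leverage this unramified $3$-extension, together with new ramification at an auxiliary prime $q$, into an infinite $3$-class field tower for a suitable $K_\delta$ via the cited theorem of Schoof.

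I would first apply the Chebotarev density theorem to a compositum containing $K_p$, its $3$-Hilbert class field $L$ (which contains $M$), and $F(\zeta_9)$, so as to impose on $q$ a prescribed splitting behavior in $L$, a prescribed cubic residue character modulo $p$ in $F$, and a prescribed congruence modulo $9$. The resulting set of $q$ has positive density, hence is infinite. For each such $q$, among the four candidates $\delta = p^aq^b$ (which determine two $S_3$-extensions up to isomorphism), I would then choose the one for which the local norm-residue symbols at $q$ align so that Chevalley's ambiguous class number formula, applied to the cyclic cubic extension $K_\delta/F$, produces maximal ambiguous $3$-rank, and so that the compositum $M \cdot K_\delta$ contributes a non-trivial unramified $3$-extension of $K_\delta$.

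Finally, I would apply Schoof's theorem to $K_\delta$, combining (i) the $3$-class of $\Cl(K_p)$ generating $M$, inherited via the compositum with $K_\delta$, (ii) ambiguous $3$-classes at $q$ coming from Chevalley's formula, and (iii) any contribution from $(1-\omega)$, to meet the threshold of that theorem---a Golod--Shafarevich-type lower bound on $d_3(\Cl(K_\delta))$, likely refined using the action of the order-$2$ element of $\Gal(K_\delta/\Q(\sqrt[3]{\delta}))$ on the $3$-class group. The main obstacle is the quantitative one: verifying that, for at least one of the two inequivalent $\delta$'s, all of these contributions simultaneously survive. Concretely, one must check that the $3$-class coming from $M$ does not capitulate wastefully in $M \cdot K_\delta$, that the unit index $[E_F : E_F \cap N_{K_\delta/F}(K_\delta^\times)]$ takes the favorable value (which should be forceable to $1$ via the Hasse norm theorem using the congruence condition on $q$ modulo $9$), and that the resulting classes are $\Z_3$-linearly independent in $\Cl(K_\delta)_3$. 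The flexibility of having two inequivalent candidates $\delta$ per prime $q$ is exactly what lets one force this compatibility; if one candidate fails the norm-index check, the other should succeed.
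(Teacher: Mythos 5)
There is a genuine gap, and in fact the overall strategy diverges from what can be made to work. First, your opening inference ``$[K_p:F]=3$ with $h(K_p)\geq 6$ implies $3\mid h(K_p)$'' is a non sequitur: nothing in the hypothesis forces $3$-torsion in $\Cl(\Q(\omega,\sqrt[3]{p}))$. (For a cyclic cubic extension of $\Q(\omega)$, which has class number one, the prime-to-$3$ part of $h$ need only be a norm from $\Z[\omega]$; a value such as $h=7$ is perfectly consistent with the hypothesis.) So the unramified cubic extension $M/K_p$ on which your whole construction rests need not exist. In the paper the hypothesis $h\geq 6$ is used purely \emph{quantitatively}, as a count of primes, not as a source of $3$-classes.

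Second, the direct attack on $d_3(\Cl(K_\delta))$ is very unlikely to clear the relevant threshold. Schoof's criterion, as used here, is not a lower bound on the $3$-rank of the class group of the base field; it is a count of primes ramifying in a cyclic degree-$3$ extension $L/H$. If you instead try to verify a Golod--Shafarevich inequality for $K_\delta$ itself, you need roughly $d_3(\Cl(K_\delta))\geq 2+2\sqrt{d_3(O_{K_\delta}^*)+1}\approx 6$, while only the three rational primes $3$, $p$, $q$ ramify in $K_\delta/F$, so Chevalley's ambiguous class number formula can supply only a handful of independent classes --- not enough, and no choice among the candidates $\delta=p^aq^b$ fixes this. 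The missing idea is to move \emph{up} before applying the criterion: take $H$ to be the Hilbert class field of $F(\sqrt[3]{p})$ and choose $q$ (by Chebotarev, with density $1/6h$) to split completely in $H$; then the $6h\geq 36$ primes of $H$ above $q$ all ramify in $L=H(\sqrt[3]{q})$, and Schoof's inequality $6h\geq 3+3h+2\sqrt{9h+1}$ holds precisely for $h\geq 6$, giving $L$ an infinite $3$-class field tower. The congruence analysis mod $9$ (your one ingredient that does match the paper) is then used not to produce classes but to pick $\delta$ so that $E=F(\sqrt[3]{p},\sqrt[3]{q})$ is unramified over $K=F(\sqrt[3]{\delta})$ at the primes above $3$; since $L/E$ is also unramified, the infinite tower descends to $K$. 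As written, your proposal also leaves its central quantitative step explicitly unverified, so it does not constitute a proof.
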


As a direct consequence of the proof of Theorem \ref{main}, we find that $\Q\big(\omega, \sqrt[3]{79\cdot
  97}\big)$ has infinite 3-class field tower.

\section{Proof of Theorem \ref{main}}

\par Our construction is
analogous to that of Schoof \cite{Schoof}, Theorem 3.4.

\par We begin with a lemma.

\begin{lemma}\label{3lem}
Let $p$ be a rational prime different from $3$. The prime 3 ramifies totally
in $\Q(\sqrt[3]{p})$ if and only if $p \not \equiv \pm 1 \pmod{9}$.
\end{lemma}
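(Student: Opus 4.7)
The plan is to work locally at 3 and factor $x^3 - p$ over $\Q_3$. The key reformulation is that $p \equiv \pm 1 \pmod 9$ iff $p$ is a cube in $\Z_3^\times$: indeed, $1 + 9\Z_3 \subseteq (\Z_3^\times)^3$ by an elementary contraction argument (given $u = 1 + 9v$, iteratively solve $(1+3w)^3 = u$), so $\Z_3^\times/(\Z_3^\times)^3 \cong (\Z/9\Z)^\times/((\Z/9\Z)^\times)^3$, and in the cyclic group $(\Z/9\Z)^\times$ of order 6 the subgroup of cubes is precisely $\{\pm 1\}$.

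Suppose first $p \equiv \pm 1 \pmod 9$ and pick $\alpha \in \Z_3$ with $\alpha^3 = p$. Then $x^3 - p = (x - \alpha)(x^2 + \alpha x + \alpha^2)$ over $\Q_3$; the quadratic factor has discriminant $-3\alpha^2$, hence splits over the ramified quadratic extension $\Q_3(\omega)$. So $\Q(\sqrt[3]{p}) \otimes_\Q \Q_3 \cong \Q_3 \times \Q_3(\omega)$, which means $3\mathcal{O}_{\Q(\sqrt[3]{p})} = \p \q^2$ with $\p \neq \q$: ramified, but not totally.

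Conversely, if $p \not\equiv \pm 1 \pmod 9$, then $x^3 - p$ has no root in $\Q_3$, so being cubic it is irreducible, and $L := \Q_3(\sqrt[3]{p})$ is a cubic extension with $efg = 3$ and $g = 1$. To exclude $(e,f) = (1,3)$, let $\epsilon \in \{\pm 1\}$ with $p \equiv \epsilon \pmod 3$; since $X^3 - \bar{p} = (X - \epsilon)^3$ in $\F_3[X]$, we have $\sqrt[3]{p} \equiv \epsilon \pmod{\mathfrak{m}_L}$, and $\beta := \sqrt[3]{p} - \epsilon$ satisfies $\beta^3 + 3\epsilon\beta^2 + 3\beta = p - \epsilon$. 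The hypothesis pins $v_3(p - \epsilon) = 1$ exactly, so the right side has $v_L$-valuation $e$; if $e = 1$ then every term on the left has $v_L \geq 2$, a contradiction. Therefore $e = 3$ and 3 is totally ramified.

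The main subtlety is this last valuation comparison, where one needs the sharper hypothesis $p \not\equiv \pm 1 \pmod 9$ (rather than merely $p \not\equiv \epsilon \pmod 3$) to fix $v_3(p - \epsilon) = 1$. An alternative self-contained route would be to invoke the classical discriminant formula for pure cubic fields: $\disc(\Q(\sqrt[3]{p})) \in \{-3p^2, -27p^2\}$ according as $p \equiv \pm 1 \pmod 9$ or not, combined with the standard fact that in a cubic field the 3-adic valuation of the discriminant takes values $0$, $1$, or $\geq 3$, with $\geq 3$ characterizing total ramification at 3.
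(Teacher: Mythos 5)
Your proof is correct, and while it shares the paper's basic strategy of localizing at $3$ and deciding whether $p$ is a cube in $\Q_3$, the execution differs in two worthwhile ways. First, where the paper identifies the cube classes by listing the unit cubes modulo $27$ and applying Hensel's lemma, you obtain the same characterization ($p$ is a cube in $\Z_3^\times$ iff $p \equiv \pm 1 \pmod 9$) structurally, from $(1+3\Z_3)^3 = 1+9\Z_3$ together with the computation of cubes in the cyclic group $(\Z/9\Z)^\times$. Second, and more substantively, the paper asserts that $3$ is totally ramified if and only if $\Q_3$ contains no cube root of $p$; only the ``only if'' direction is immediate from the sum-of-local-degrees formula, since when $x^3-p$ is irreducible over $\Q_3$ one must still rule out the unramified cubic extension, i.e.\ the case $(e,f)=(1,3)$. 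Your valuation computation with $\beta = \sqrt[3]{p}-\epsilon$, which uses precisely the sharper hypothesis $v_3(p-\epsilon)=1$, supplies this step, so your argument is in fact more complete than the one printed in the paper. The forward direction via the explicit factorization $\Q(\sqrt[3]{p})\otimes_{\Q}\Q_3 \cong \Q_3\times\Q_3(\omega)$ also yields slightly more information (the exact splitting type $\p\q^2$) than is strictly needed to deny total ramification. Both routes are elementary and of comparable length; the discriminant-formula alternative you sketch at the end would also work but imports a heavier classical fact.
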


\begin{proof}
Since $[\Q(\sqrt[3]{p}):\Q]$ is the sum of the local degrees
$[\Q(\sqrt[3]{p})_{\q_i}:\Q_3]$, where $\q_i$ is a prime of
$\Q(\sqrt[3]{p})$ above $3$, we see that 3 is totally ramified in
$\Q(\sqrt[3]{p})$ if and only if no third root of $p$ is contained in
$\Q_3$. Consider the equation $x^3-p \equiv 0 \pmod{27}$. This
equation has a solution if and only if $p \equiv \pm 1, \pm 8,$ or $\pm
10 \pmod{27}$. For such $p$, $3p^2$ is divisible by exactly one power
of $3$. Thus, by Hensel's lemma, we conclude that $\Q_3$ contains a
cube root of $p$ exactly when $p \equiv \pm 1, \pm 8,$ or $ \pm 10
\pmod{27}$. But these congruences are equivalent to the congruence
$p \equiv \pm 1 \pmod{9}$. 
\end{proof}

\begin{remark} \label{3rk}
The same proof shows that Lemma \ref{3lem} holds if $p$ is replaced by
any integer that is prime to 3 and not a perfect cube. 
\end{remark}

\par  Let $p$ be any prime different from $3$, and let $h$ be the
class number of $\Q(\omega, \sqrt[3]{p})$ with $H$ its Hilbert class field. Let $q$ be a rational prime that splits
completely in $H$, so by
class field theory, $q$ is a prime that splits completely into
principal prime ideals in
$\Q(\omega, \sqrt[3]{p}$). In what follows, we find $\delta=\delta_{p,q}
\in \{p^aq^b\}_{1\leq a, b \leq 2}$ so that $E$ is unramified over
$K:=F(\sqrt[3]{\delta})$ (see Figure \ref{fig}). 

\par Suppose that $p \not \equiv \pm 1
\pmod{9}$, so that 3 ramifies completely in $\Q(\sqrt[3]{p})$ by Lemma
\ref{3lem}. Then either $pq \not \equiv \pm 1 \pmod{9}$ or $pq^2 \not \equiv
\pm 1 \pmod{9}$ (or both). Pick $\delta=pq$ or $\delta=pq^2$ so that
$\delta \not \equiv \pm 1 \pmod{9}$. Let $F=\Q(\omega)$ and
$E=F(\sqrt[3]{p}, \sqrt[3]{q})$. The ramification degree $e(F(\sqrt[3]{p}),3)$ of 3 in
$F(\sqrt[3]{p})$ is 6. 
\par We claim that $e(E,3)=6$. Suppose for contradiction that this is
not so, in which case we must have $e(E,3)=18$. This means that the field
$E$ has a single prime $\mathfrak{l}$ lying above 3,
and that $[E_{\mathfrak{l}}:\Q_3]=[E:\Q]$, and likewise for
every intermediate field between $\Q$ and $E$. One checks that at least one element of the set $\{q, pq, pq^2
\pmod{9}\}$ is congruent to $\pm 1 \pmod{9}$. Pick $\gamma \in \{q, pq,
pq^2\}$ so that $\gamma \equiv \pm 1 \pmod{9}$. Let $E'=\Q(\sqrt[3]{\gamma})$.
The extension $E'/\Q$ is degree 3, but the corresponding extension of
local fields is either degree one or two (depending on which prime
above three in $E'$ one chooses). This gives the desired
contradiction. 
\par We claim that $E/K$ is unramified. Since $E$ is generated over
$K$ by either $x^3-p$ or $x^3-q$, the relative discriminant of $E/K$ must divide the ideal $(3^3)$ of $K$. Therefore, the only
possible primes of $K$ that can ramify in $E$ are those lying above
3. It is necessary and sufficient to show that $e(K,3)=6$. Since
$\delta \not \equiv \pm 1 \pmod{9}$, Remark \ref{3rk} shows that 3 is
totally ramified in $\Q(\sqrt[3]{\delta})$, from which it follows that
$e(K,3)=6$. 
\par Suppose now that $p \equiv \pm 1 \pmod{9}$. If $q \not \equiv \pm
1 \pmod{9}$, take $\delta=pq$. The previous argument, with the roles
of $p$ and $q$ now reversed, shows that $e(E,3)=6$ and that $E/K$ is
unramified. If $q \equiv \pm 1 \pmod{9}$, then
there is a prime of $\Q(\sqrt[3]{q})$ lying above 3 and unramified
over $3$, and likewise for $\Q(\sqrt[3]{p})$. It follows that
$\Q(\sqrt[3]{p}, \sqrt[3]{q})$ also has a prime lying above 3 and
unramified over 3, and from here that $e(E,\Q)=2$. So in the case $p,q
\equiv \pm{1} \pmod{9}$, we may take $\delta$ to be any element of
$\{p^aq^b\}_{1 \leq a, b \leq 2}$, and $E/K$ will be unramified.

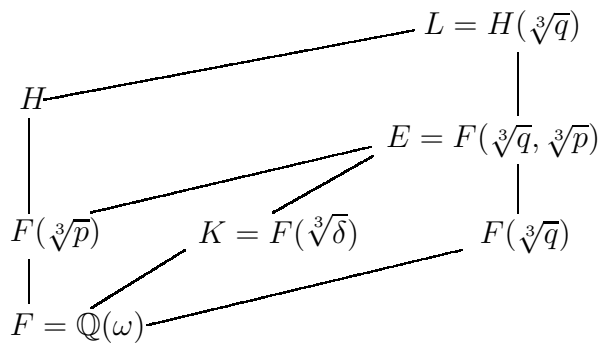
\begin{figure}
\setlength{\unitlength}{2.5cm}
\caption{Field Diagram for Theorem \ref{main}.}\label{fig}
\begin{picture}(2.5,2.5)(-1.6,-.4)
\put(0.0, 0.0){$F=\Q(\omega)$}
\put(0.0, .5){$F(\sqrt[3]{p})$}
\put(1,.49){$K=F(\sqrt[3]{\delta})$}
\put(2,1.0){$E=F(\sqrt[3]{q}, \sqrt[3]{p})$}
\put(2.5,.49){$F(\sqrt[3]{q})$}
\put(.05,1.2){$H$}
\put(2.2,1.6){$L=H(\sqrt[3]{q})$}
\qbezier(.1,.17)(.1,.17)(.1,.4)
\qbezier(.43,.14)(.43,.14)(.93,.45)
\qbezier(.73,.05)(.73,.05)(2.4,.45)
\qbezier(2.7,.65)(2.7,.65)(2.7,.9)
\qbezier(1.4,.65)(1.4,.65)(1.93,.95)
\qbezier(.43,.65)(.43,.65)(1.92,1)
\qbezier(.1,.65)(.1,.65)(.1,1.15)
\qbezier(2.7,1.17)(2.7,1.17)(2.7,1.5)
\qbezier(.18, 1.25)(.18, 1.25)(2.15,1.62)

\end{picture}
\end{figure}

\par We are now ready to invoke the theorem of Schoof \cite{Schoof}. First
we set notation. Given any number field $H$, let $O_H$ denote the
ring of integers of $H$. Let $U_H$ be the
units in the id\`ele group of $H$-- that is, the id\`eles with valuation
zero at all finite places. Given a finite extension $L$ of $H$, we
have the norm map $N_{U_L/U_H}:U_L \to U_H$, which is just the
restriction of the norm map from the id\`eles of $L$ to the id\`eles
of $H$. We may view $O_H^*$ as a
subgroup of $U_H$ by embedding it along the diagonal. Given a finitely generated abelian group
$A$, let $d_l(A)$ denote the dimension of the $\F_l$-vector space
$A/lA$. \\

\begin{thm}\textup{[Schoof]\cite{Schoof}}
 Let $H$ be a number field. Let $L/H$ be a cyclic extension of prime
 degree $l$, and
 let $\rho$ denote the number of primes (both finite and infinite) of
 $H$ that ramify
in $L$. Then $L$ has infinite $l$-class field tower if 
\begin{align*}
\rho\geq 3+d_l\big(O_H^*/(O_H^*\cap N_{U_L/U_H}U_L)\big)+2\sqrt{d_l(O_L^*)+1}\hspace{.1cm} .
\end{align*}
\end{thm}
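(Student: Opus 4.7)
The plan is to prove Schoof's criterion by combining three classical ingredients: the Golod--Shafarevich inequality for finite pro-$l$ groups, Shafarevich's bound on the relation rank of the Galois group of the maximal unramified pro-$l$ extension, and the Chevalley ambiguous class number formula for the cyclic extension $L/H$. Let $M$ be the maximal unramified pro-$l$ extension of $L$, and set $G:=\Gal(M/L)$. The $l$-class field tower of $L$ is infinite iff $G$ is infinite, and class field theory identifies $G^{\mathrm{ab}}$ with the $l$-part of $\Cl(L)$, so the minimal generator count $d(G):=\dim_{\F_l} H^1(G,\F_l)$ equals $d_l(\Cl(L))$.

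First I would combine Golod--Shafarevich with the Shafarevich--Koch relation estimate. Golod--Shafarevich (Koch's form) says that any finite pro-$l$ group $G$ satisfies $r(G)>d(G)^2/4$ where $r(G):=\dim_{\F_l} H^2(G,\F_l)$; contrapositively, $G$ is infinite as soon as $r(G)\leq d(G)^2/4$. For the specific unramified pro-$l$ Galois group in play, a standard Poitou--Tate calculation gives the relation bound
\[
r(G)\leq d(G)+d_l(O_L^*),
\]
valid uniformly whether or not $\zeta_l\in L$, since the extra Kummer contribution in the cyclotomic case is exactly compensated by the corresponding rise of one in $d_l(O_L^*)$. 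Substituting the relation bound into the G--S criterion and completing the square shows that $G$ is infinite whenever $d(G)\geq 2+2\sqrt{d_l(O_L^*)+1}$.

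The remaining ingredient is a lower bound on $d_l(\Cl(L))$ in terms of $\rho$, supplied by genus theory. The Chevalley ambiguous class number formula reads
\[
|\Cl(L)^{\Gal(L/H)}|=h(H)\cdot\frac{\prod_{v}e_v(L/H)}{l\cdot[O_H^*:O_H^*\cap N_{L/H}L^*]},
\]
the product ranging over all places of $H$ (finite and archimedean). Each ramification index $e_v$ is either $1$ or $l$, and exactly $\rho$ of them equal $l$, so the numerator contributes a factor of $l^\rho$. Taking $l$-adic valuations, using $d_l(\Cl(L))\geq d_l(\Cl(L)^{\Gal(L/H)})$, and identifying $O_H^*\cap N_{L/H}L^*$ with $O_H^*\cap N_{U_L/U_H}U_L$ (both capture exactly the global units that arise as norms), yields
\[
d_l(\Cl(L))\geq \rho-1-d_l\bigl(O_H^*/(O_H^*\cap N_{U_L/U_H}U_L)\bigr).
\]
Assembling this with the previous step, the hypothesis on $\rho$ translates directly into $d(G)\geq 2+2\sqrt{d_l(O_L^*)+1}$, and a quick algebraic check confirms $d(G)^2\geq 4(d(G)+d_l(O_L^*))\geq 4r(G)$, so $G$ is infinite.

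The main obstacle I anticipate is the careful derivation of the Shafarevich relation bound: one must verify that the cohomological computation comparing $H^2(G,\F_l)$ with the Kummer and idelic groups, via Poitou--Tate duality, produces no extra correction terms even in the delicate case $\zeta_l\in L$. A secondary but lighter bookkeeping issue is the archimedean part of the genus formula, where one tracks whether real places of $H$ become complex in $L$ and how this affects the unit-norm index; this is controlled by a direct computation of the Herbrand quotient of $O_L^*$ as a $\Gal(L/H)$-module.
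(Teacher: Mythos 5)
The paper itself offers no proof of this statement: it is quoted verbatim from Schoof's article. So your attempt can only be measured against the standard argument in the literature, and your overall architecture is exactly that argument: Golod--Shafarevich in the form $r(G)>d(G)^2/4$ for nontrivial finite $l$-groups, the Shafarevich relation bound $r(G)\leq d(G)+d_l(O_L^*)$ for the Galois group of the maximal everywhere-unramified pro-$l$ extension of $L$ (your remark that the $\zeta_l\in L$ correction is absorbed by the torsion contribution to $d_l(O_L^*)$ is correct), and a genus-theoretic lower bound $d_l(\Cl(L))\geq \rho-1-d_l\bigl(O_H^*/(O_H^*\cap N_{U_L/U_H}U_L)\bigr)$. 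The algebra assembling the three pieces is also correct, as is the identification of $O_H^*\cap N_{L/H}L^*$ with $O_H^*\cap N_{U_L/U_H}U_L$ via the Hasse norm theorem for cyclic extensions.

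The genuine gap is in your derivation of the genus-theoretic ingredient. Chevalley's formula computes the \emph{order} of the ambiguous class group $\Cl(L)^{\Gal(L/H)}$, and a lower bound of $l^k$ on the order of a finite abelian $l$-group does not give a lower bound of $k$ on its $l$-rank: $\Cl(L)^{\Gal(L/H)}$ contains the image of $\Cl(H)$, whose $l$-part may well be cyclic of large order, and even the quotient by that image need only have exponent dividing $l^2$ rather than $l$. So the step ``taking $l$-adic valuations'' of the ambiguous class number formula, combined with $d_l(\Cl(L))\geq d_l(\Cl(L)^{\Gal(L/H)})$, does not prove the rank inequality you need. The standard repair --- and what Schoof actually does --- is to exhibit an explicit elementary abelian subquotient of $\Cl(L)$ of the required rank. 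One way is via the strongly ambiguous classes (classes of $\Gal(L/H)$-invariant ideals) modulo the image of $\Cl(H)$: these fit into an exact sequence whose middle term is
\begin{align*}
\Bigl(\bigoplus_{v\ \mathrm{ramified}}\Z/e_v\Z\Bigr)\Big/\bigl(\Z/l\Z\bigr)\ \cong\ (\Z/l\Z)^{\rho-1}
\end{align*}
mapping onto a group of rank $d_l\bigl(O_H^*/(O_H^*\cap N_{U_L/U_H}U_L)\bigr)$, with kernel an elementary abelian subgroup of $\Cl(L)$ of rank at least $\rho-1-d_l(\cdots)$. Equivalently, one can use the genus field (the maximal extension of $L$ unramified over $L$ and abelian over $H$), whose Galois group over $L$ is an elementary abelian quotient of $\Cl(L)$ computed by class field theory from the inertia groups at the $\rho$ ramified places and the image of $O_H^*$ under the local norm residue maps. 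Either route gives the rank bound directly; the order-based route as you wrote it does not.
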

\par We apply Schoof's theorem to the extension $L:=H(\sqrt[3]{q})$ over $H$,
where $H$, as above, is the Hilbert class field of $F(\sqrt[3]{p})$. All $6h$ primes in $H$ above $q$ ramify completely in the field
$H(\sqrt[3]{q})$. Thus $\rho \geq 6h$, with strict inequality if and
only if the primes above 3 in $H$ ramify. By Dirichlet's unit theorem,
$d_3(O_L^*)=9h$ and $d_3(O_H^*)=3h$. Thus if $h$ satisfies $6h \geq
3+3h+2\sqrt{9h+1}$, then $L$ will have infinite $3$-class field
tower. Since $L/K$ is an unramified (as both $L/E$ and $E/K$ are
unramified) solvable extension, it follows that $K$ has infinite
class field tower as well. The minimal such $h$ is given by
$h=6$.
\par This proves the following version of our main theorem:

\begin{thm}\label{main2}
Let $p\neq 3$ be prime and suppose the class number $h$ of $\Q(\omega, \sqrt[3]{p})$ is at least 6. Let $q$ be
a prime that splits completely into principal ideals in $\Q(\omega,
\sqrt[3]{p})$. Then there exists $\delta \in \{p^aq^b\}_{1 \leq a,b
  \leq 2}$ such that $\Q(\omega, \sqrt[3]{\delta})$ has infinite class field tower. 
\end{thm}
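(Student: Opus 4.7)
The plan is to produce the required $\delta$ so that the compositum $E = F(\sqrt[3]{p}, \sqrt[3]{q})$ is unramified over $K = F(\sqrt[3]{\delta})$, and then apply Schoof's theorem to a cyclic cubic extension built from the Hilbert class field of $F(\sqrt[3]{p})$ to deduce an infinite $3$-class field tower, which then descends to $K$.

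First I would choose $\delta$ by analyzing ramification of $3$. Since $E/K$ is generated by either $\sqrt[3]{p}$ or $\sqrt[3]{q}$, its relative discriminant divides $(3^3)$, so only primes above $3$ could ramify; this reduces unramifiedness to the statement $e(E,3) = e(K,3) = 6$. By Lemma \ref{3lem} and Remark \ref{3rk}, total ramification of $3$ in $\Q(\sqrt[3]{\delta})$ is equivalent to $\delta \not\equiv \pm 1 \pmod{9}$. I would then split into cases on the residues of $p, q$ modulo $9$. If $p \not\equiv \pm 1 \pmod 9$, at least one of $pq, pq^2$ is also $\not\equiv \pm 1 \pmod 9$; choosing $\delta$ accordingly gives $e(K,3) = 6$. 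To prevent the potential $e(E,3) = 18$, observe that at least one element of $\{q, pq, pq^2\}$ lies in $\pm 1 \pmod 9$, yielding a cube root in $\Q_3$ and hence a degree-$3$ subextension of $E/\Q$ in which $3$ is not totally ramified. The case $p \equiv \pm 1, q \not\equiv \pm 1 \pmod 9$ is symmetric, and when both $p, q \equiv \pm 1 \pmod 9$, a direct local argument shows that $E/\Q$ has a prime above $3$ unramified over $3$, so $e(E,3) \leq 6$ and any $\delta$ works.

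Next I would let $H$ be the Hilbert class field of $F(\sqrt[3]{p})$ and apply Schoof's theorem to the cyclic cubic extension $L/H$ with $L = H(\sqrt[3]{q})$. Since $q$ splits completely into principal primes in $F(\sqrt[3]{p})$, it splits into $6h$ primes of $H$, each of which ramifies totally in $L$, giving $\rho \geq 6h$. By Dirichlet's unit theorem $d_3(O_H^*) = 3h$ and $d_3(O_L^*) = 9h$, and $O_H^*/(O_H^* \cap N_{U_L/U_H} U_L)$ is a quotient of $O_H^*/(O_H^*)^3$, so its $3$-rank is at most $3h$. The hypothesis $h \geq 6$ then makes Schoof's inequality
\[ 6h \geq 3 + 3h + 2\sqrt{9h + 1} \]
hold, so $L$ has infinite $3$-class field tower.

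Finally, since $H/F(\sqrt[3]{p})$ is unramified, $L = EH$ is unramified over $E$; combined with the unramifiedness of $E/K$ from the first step, $L/K$ is unramified and solvable, so the infinite class field tower of $L$ forces one on $K = \Q(\omega, \sqrt[3]{\delta})$. The main obstacle I expect is the first step: the ramification analysis at $3$ is delicate because the relevant ramification indices differ only by a factor of three, and ruling out $e(E,3) = 18$ in each residue configuration requires exhibiting an explicit degree-$3$ subextension at $3$ that is not totally ramified. The Schoof step and the descent are, by comparison, routine counts.
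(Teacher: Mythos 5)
Your proposal is correct and follows essentially the same route as the paper: the same case analysis modulo $9$ to choose $\delta$ with $E/K$ unramified, the same application of Schoof's theorem to $L=H(\sqrt[3]{q})$ over $H$ with $\rho\geq 6h$, $d_3(O_H^*)=3h$, $d_3(O_L^*)=9h$, and the same descent of the infinite tower from $L$ to $K$ via the unramified solvable extension $L/K$. Your explicit remark that $d_3\bigl(O_H^*/(O_H^*\cap N_{U_L/U_H}U_L)\bigr)\leq d_3(O_H^*)=3h$ just makes precise the bound the paper uses implicitly.
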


\begin{remark}
By the Chebotarev density theorem, the density of such $q$ is
$\frac{1}{6h}$. 
\end{remark}

\begin{remark}
Since $\delta \equiv \pm 1 \pmod{9}$ if and only if $\delta^2 \equiv
\pm 1 \pmod{9}$, the proof of Theorem \ref{main2} goes through with
$\delta$ replaced by $\delta^2$. Thus we always generate at least two
$S_3$ extensions of $\Q$ unramified outside $\{3, p, q\}$ with infinite class
field tower. 
\end{remark}

\par The field $\Q(\omega, \sqrt[3]{79})$ has class number 12, and
$97$ splits completely into a product of principal ideals in this
field \cite{sage}, so
we obtain

\begin{corollary}
The field $\Q\big(\omega, \sqrt[3]{79\cdot
  97}\big)$ has infinite 3-class field tower.
\end{corollary}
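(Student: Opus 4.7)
The plan is to verify the hypotheses of Theorem \ref{main2} with $p = 79$ and $q = 97$, and then unwind the case analysis in the proof of Theorem \ref{main} to confirm that the specific exponent pair $(a,b) = (1,1)$ is among the valid outputs of the construction. The two inputs I would quote (and not reprove) are the Sage-computed facts that $h\bigl(\Q(\omega, \sqrt[3]{79})\bigr) = 12$ and that $97$ splits completely into principal prime ideals in $\Q(\omega, \sqrt[3]{79})$. Since $12 \geq 6$, the class number hypothesis of Theorem \ref{main2} is met, and the principal splitting of $97$ is precisely the condition imposed on $q$ there.

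Next I would pin down which element of $\{p^a q^b\}_{1 \leq a,b \leq 2}$ the construction produces. A quick congruence check gives $79 \equiv 97 \equiv 7 \equiv -2 \pmod{9}$, so neither prime is $\pm 1 \pmod{9}$. In particular $p = 79 \not\equiv \pm 1 \pmod{9}$, which places us in the first branch of the proof of Theorem \ref{main}: the recipe there is to choose $\delta \in \{pq, pq^2\}$ subject to $\delta \not\equiv \pm 1 \pmod{9}$. Since $79 \cdot 97 \equiv (-2)(-2) \equiv 4 \pmod{9}$, the choice $\delta = 79 \cdot 97$ already satisfies this non-congruence, so the construction in the proof of Theorem \ref{main} legitimately outputs $\delta = 79 \cdot 97$.

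Assembling these pieces, Theorem \ref{main2} (equivalently, the argument of Theorem \ref{main} applied with this particular $\delta$) yields that $K = \Q\bigl(\omega, \sqrt[3]{79 \cdot 97}\bigr)$ has infinite $3$-class field tower. The only genuine obstacle is computational, namely certifying the two Sage-produced facts about $\Q(\omega, \sqrt[3]{79})$; once those are granted, the corollary reduces to a one-line congruence check and a direct appeal to the main theorem.
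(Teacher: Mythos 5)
Your proposal is correct and follows essentially the same route as the paper, which likewise just cites the Sage computations ($h = 12 \geq 6$ and the principal splitting of $97$) and invokes the main theorem. You are in fact slightly more careful than the paper in explicitly checking $79 \equiv 97 \equiv -2 \pmod 9$, hence $79\cdot 97 \equiv 4 \not\equiv \pm 1 \pmod 9$, which confirms that the construction really does output $\delta = 79\cdot 97$ rather than some other element of $\{p^aq^b\}$.
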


\begin{remark}
It is a Theorem of Koch and Venkov \cite{kv} that a quadratic
imaginary field whose class group has $p$-rank three or larger has
infinite $p$-class field tower. From the tables in
\cite{tables}, we see that the smallest known imaginary quadratic field with
$3$-rank at least three is $\Q(\sqrt{-3321607})$, with root
discriminant $\approx 1822.5$. The field $\Q\big(\omega, \sqrt[3]{79\cdot
  97}\big)$ has root discriminant $\approx 1400.4$. 
\end{remark}

\par We can bring down the requirement $h \geq 6$. The trade off is
that we will have to assume 
\begin{equation}\label{*}
\textrm{ The primes of } H \textrm{ that ramify in }L  \textrm{ split completely in } H(\sqrt[3]{O_H^*}).
  \end{equation}

\par If $p \equiv \pm 1 \pmod{9}$ and $q \not \equiv \pm 1 \pmod{9}$,
then ramification considerations show that the primes above $3$ in $H$ ramify in $L$; otherwise, the only
primes in $H$ ramifying in $L$ are those above $q$.

\par Suppose \eqref{*} holds. We claim that $O_H^*\cap N_{U_L/U_H}U_L=O_H^*$. Let $x$ be
an arbitrary element of $O_H^*$. We construct $y=(y_w) \in U_L$ such that
$Ny=x$.  Consider first the primes of $H$ that are
unramified in $L$. Let $v$ be such a prime and suppose $w_1, \ldots,
w_a$ $(a=1 \textrm{ or } 3)$ are the primes above $v$ in $L$. Because $v$ is unramified, the local norm
map $N:O_{L_{w_i}}^*\to O_{H_v}^*$ is surjective, so we can pick $y_v
\in L_{w_1}$ such that $Ny_v=x$. Put $1$ in the $w_2$ and $w_3$ components of $y$ if
$a=3$.
\par Now let $v$ be a prime of $H$ that ramifies in $L$. The assumption that $v$ splits completely in
$H(\sqrt[3]{O_H^*})$ means that $\sqrt[3]{O_H^*} \in H_v$. Letting $w_1, w_2, w_3$ be the primes
above $v$, we can put $\sqrt[3]{x}$ in
the $w_1$ component and $1$ in the $w_2$ and $w_3$ components of
$y$. Putting the ramified and unramified components of $y$ together
gives the desired element.
\par Under assumption \eqref{*}, the inequality needed for $L$ to have an infinite class field tower now becomes
\begin{align*}
6h \geq 3+2\sqrt{9h+1},
\end{align*}
 which is satisfied by $h \geq 2$. This gives

\begin{thm}
Let $p$ be a prime with $p \not \equiv \pm 1 \pmod{9}$. Either $\Q(\omega,
\sqrt[3]{p})$ has class number one, or there exist infinitely many primes $\{q_p\}$ such that $\Q(\omega,
\sqrt[3]{\delta_{p,q_p}})$ has infinite class field tower. 
\end{thm}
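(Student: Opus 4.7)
The plan is to separate the two alternatives stated. If $\Q(\omega, \sqrt[3]{p})$ has class number $1$, there is nothing to prove, so I will assume $h \geq 2$. Under hypothesis \eqref{*}, the inequality $6h \geq 3 + 2\sqrt{9h+1}$ is satisfied for $h \geq 2$, so the goal reduces to producing infinitely many rational primes $q$ (with $q \neq 3, p$) that split completely into principal primes in $\Q(\omega, \sqrt[3]{p})$ and for which \eqref{*} holds; the choice of $\delta$ and the ensuing arguments of Theorem \ref{main} then give $\Q(\omega, \sqrt[3]{\delta_{p,q}})$ with infinite $3$-class field tower.

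The essential observation is that $p \not\equiv \pm 1 \pmod{9}$: according to the discussion preceding \eqref{*}, this forces the only primes of $H$ ramifying in $L = H(\sqrt[3]{q})$ to be those above $q$. Hypothesis \eqref{*} therefore becomes the statement that the primes above $q$ in $H$ split completely in $M := H(\sqrt[3]{O_H^*})$. Combined with the requirement that $q$ split completely in $H$, this is equivalent to asking that $q$ split completely in $M$. I would then invoke the Chebotarev density theorem to conclude that infinitely many such $q$ exist, after removing the finite set $\{3, p\}$; the conclusion for $K = \Q(\omega, \sqrt[3]{\delta_{p,q}})$ then follows because $L/K$ is unramified and solvable.

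The main obstacle is verifying that $M/\Q$ is a finite Galois extension, so that Chebotarev applies. Finiteness of $M/H$ follows from Dirichlet's unit theorem, since $O_H^*/(O_H^*)^3$ is finite. For the Galois property, I first note that $F(\sqrt[3]{p})/\Q$ is Galois, as $\omega \in F$ ensures that all three cube roots of $p$ belong to $F(\sqrt[3]{p})$; the Hilbert class field of a Galois extension of $\Q$ is itself Galois over $\Q$, because any $\Q$-embedding $\sigma \colon H \to \bar{\Q}$ sends $H$ to an unramified abelian extension of $F(\sqrt[3]{p})$, hence back into $H$. Then $O_H^*$ is $\Gal(H/\Q)$-stable, and since $\omega \in H$, any Galois conjugate of $\sqrt[3]{u}$ for $u \in O_H^*$ has the form $\omega^j \sqrt[3]{\sigma(u)}$, which lies in $M$. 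This confirms that $M/\Q$ is Galois, completing the application of Chebotarev and thereby the proof.
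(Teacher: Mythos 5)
Your proof is correct and follows essentially the same route as the paper: the paper's one-sentence proof simply declares $\{q_p\}$ to be the set of rational primes splitting completely in $H(\sqrt[3]{O_H^*})$, relying on the preceding observation that for $p \not\equiv \pm 1 \pmod 9$ only the primes above $q$ ramify in $L$, so that such $q$ automatically satisfy \eqref{*} and the $h\geq 2$ inequality applies. The details you supply --- that $H(\sqrt[3]{O_H^*})/\Q$ is finite and Galois so that Chebotarev produces infinitely many such $q$, and that splitting completely there forces $q$ to split into principal primes in $\Q(\omega,\sqrt[3]{p})$ --- are exactly what the paper leaves implicit, and they are correct.
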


\begin{proof}
  For such $p$, the set $\{q_p\}$ consists of all rational primes
  splitting completely in $H(\sqrt[3]{O_H^*})$.
\end{proof}

\bibliographystyle{plain}
\bibliography{myrefs}

\def\cprime{$'$}
\begin{thebibliography}{1}

\bibitem{OnePrimeJing}
Jing~Long Hoelscher.
\newblock Infinite class field towers.
\newblock {\em Math. Ann.}, 344(4):923--928, 2009.

\bibitem{tables}
Michael~J. Jacobson~Jr., Shantha Ramachandran, and Hugh~C. Williams.
\newblock {\em Supplementary tables for ``Numerical results on class groups of
  imaginary quadratic fields''}, 2006.
\newblock
  http://page.math.tu-berlin.de/~kant/ants/Proceedings/ramachandran-74/ramacha%
ndran-74-tables.pdf.

\bibitem{Martinet}
Jacques Martinet.
\newblock Petits discriminants.
\newblock {\em Ann. Inst. Fourier (Grenoble)}, 29(1):xv, 159--170, 1979.

\bibitem{OdlyzkoSummary}
A.~M. Odlyzko.
\newblock Bounds for discriminants and related estimates for class numbers,
  regulators and zeros of zeta functions: a survey of recent results.
\newblock {\em S\'em. Th\'eor. Nombres Bordeaux (2)}, 2(1):119--141, 1990.

\bibitem{MR0218331}
Peter Roquette.
\newblock On class field towers.
\newblock In {\em Algebraic {N}umber {T}heory ({P}roc. {I}nstructional {C}onf.,
  {B}righton, 1965)}, pages 231--249. Thompson, Washington, D.C., 1967.

\bibitem{Schmithals}
Bodo Schmithals.
\newblock Konstruktion imagin\"arquadratischer {K}\"orper mit unendlichem
  {K}lassenk\"orperturm.
\newblock {\em Arch. Math. (Basel)}, 34(4):307--312, 1980.

\bibitem{Schoof}
Ren{\'e} Schoof.
\newblock Infinite class field towers of quadratic fields.
\newblock {\em J. Reine Angew. Math.}, 372:209--220, 1986.

\bibitem{sage}
W.\thinspace{}A. Stein et~al.
\newblock {\em {S}age {M}athematics {S}oftware}.
\newblock The Sage Development Team.
\newblock {\tt http://www.sagemath.org}.

\bibitem{kv}
B.~B. Venkov and H.~Koh.
\newblock The {$p$}-tower of class fields for an imaginary quadratic field.
\newblock {\em Zap. Nau\v cn. Sem. Leningrad. Otdel. Mat. Inst. Steklov.
  (LOMI)}, 46:5--13, 140, 1974.
\newblock Modules and representations.

\end{thebibliography}
\end{document}